\numberwithin{equation}{section}
\newtheorem {theorem}{Theorem}[section]
\newtheorem {lemma}[theorem]{{\bf Lemma}}
\newtheorem {proposition}[theorem]{{\bf Proposition}}
\theoremstyle{remark}
\theoremstyle{plain} \numberwithin {equation}{section}
\def\na{\nabla}
\newcommand{\p}{\partial}
\def\R{\mathbb{R}}
\def\div{ \hbox{\rm div}\,  }
\def\u{ \mathbf{u} }
\def\b{ \mathbf{B} }
\def\f{\frac}
\def\la{ \nabla }
\def\nn{\nonumber}
\newcommand{\norm}[2]{\lVert #1 \rVert_{#2}}
\begin{document}

\title [Global solutions to 3D MHD equations]{Global solutions to 3D compressible MHD equations with partial\\[1ex] magnetic diffusion}

\author[J. Wu]{Jiahong Wu}
\address[J. Wu]{Department of Mathematics, University of Notre
	Dame, Notre Dame, IN 46556, USA } \email{jwu29@nd.edu}

\author[X. Zhai]{Xiaoping Zhai}
\address[X. Zhai]{School of Mathematics and Statistics, Guangdong University of Technology,
Gua-ngzhou, 510520, China} \email{pingxiaozhai@163.com (Corresponding author)}

\subjclass[2020]{35Q35, 35A01, 35A02, 76W05}
\keywords{Compressible MHD equations; Global solutions; Partial dissipation }

\begin{abstract}
The global existence of strong solutions to the compressible viscous magnetohydrodynamic (MHD) equations in $\R^3$ remains a significant open problem. When there is no magnetic diffusion, even small data global well-posedness is unknown. This study investigates the Cauchy problem in $\R^3$ for the compressible viscous MHD equations with horizontal magnetic diffusion. Using various anisotropic Sobolev inequalities and sharp estimates, we establish the existence of global solutions under small initial data within the Sobolev space framework.
\end{abstract}
\maketitle


\section{Introduction and main result}
This paper examines the Cauchy problem in $\R^3$ for the 3D compressible viscous magnetohydrodynamic (MHD) equations with partial dissipation in the magnetic field. The MHD system under study is given by
\begin{eqnarray}\label{m1}
	\left\{\begin{aligned}
		&\partial_t \rho + {\mathrm{div}} (\rho \mathbf{u}) =0,  \\
		&\rho\partial_t \mathbf{u}+ \rho\mathbf{u}\cdot\nabla \mathbf{u}-\mu\Delta \mathbf{u} - (\lambda+\mu) \nabla {\mathrm{div}} \mathbf{u}+\nabla P=(\nabla\times\mathbf{B})\times\mathbf{B},\\
		&\partial_t \mathbf{B}-\sigma\Delta_h \mathbf{B}+\mathbf{u}\cdot\nabla\mathbf{B}+\mathbf{B}\div\mathbf{u}=\mathbf{B}\cdot\nabla\mathbf{u},\\
		&{\mathrm{div}} \mathbf{B} =0,\\
&(\rho,\u,\mathbf{B})|_{t=0}=(\rho_0,\u_0,\mathbf{B}_0),
	\end{aligned}\right.
\end{eqnarray}
 defined for $(t, x) \in \mathbb{R}_+ \times \mathbb{R}^3$. Here, the variables $\rho = \rho(t, x)$, $\mathbf{u} = \mathbf{u}(t, x)$, and $\mathbf{B} = \mathbf{B}(t, x)$ represent the fluid density, velocity field, and magnetic field, respectively. The pressure $P = P(\rho)$ is a smooth function of density, satisfying the conditions $P'(\rho) > 0$ and $P'(\bar{\rho}) = 1$, where $\bar{\rho} > 0$ is a constant reference density. The parameters $\mu$ and $\lambda$ denote shear viscosity and volume viscosity coefficients, respectively, satisfying the standard strong parabolicity assumption.
\begin{align*}
	\mu>0\quad\hbox{and}\quad
	\nu\stackrel{\mathrm{def}}{=}\lambda+2\mu>0.
\end{align*}
$\sigma>0$ is the electrical conductivity of the magnetic field. For notational convenience, we have written
\begin{align*}
\Delta_h=\partial_1^2+\partial_2^2
\end{align*}
and we shall also write $\nabla_h=(\partial_1,\partial_2)$.

We shall consider the Cauchy
problem of \eqref{m1} in $\R^3$ with the initial data satisfying
\begin{align}\label{long}
(\rho_0(x), \u_0(x), \b_0(x))\to(\bar{\rho}, \mathbf{0}, \mathbf{0}) \quad \mathrm{ for }
\quad|x|\to \infty,
\end{align}
where  $\bar{\rho} $  is a    given  positive constant.

\vskip .1in
MHD systems provide a fundamental framework for describing the dynamics of electrically conducting fluids, such as plasmas, liquid metals, and electrolytes. In these systems, the velocity field obeys the Navier-Stokes equations, modified by the Lorentz force due to the magnetic field, while the magnetic field evolves according to Maxwell's equations. The MHD equations play a crucial role in understanding various phenomena across geophysics, astrophysics, cosmology, and engineering (see, e.g., \cite{Bis,Davi, Pri}). Below, we provide a brief overview of key results in this field.
\begin{itemize}
\item When $\mu > 0$ and $\sigma > 0$, replacing   $-\Delta_h \mathbf{B}$ with  $-\Delta \mathbf{B}$ in Equation \eqref{m1} yields the compressible viscous resistive MHD equations. These equations have attracted considerable attention in recent research due to their wide-ranging physical applications, inherent complexity, diverse phenomena, and associated mathematical challenges. For instance, studies such as those by \cite{ChenWang, hongguangyi, HuWang, lihailiang, wuguochun, Xiao} have explored various aspects of these equations.
 \vskip .1in
  \item
  When
$\mu > 0$ and $\sigma = 0$, Equation \eqref{m1} reduces to the compressible viscous non-resistive MHD equations. The lack of magnetic diffusivity poses substantial challenges in establishing well-posedness, particularly for small initial data, and in analyzing stability near a background magnetic field. Wu and Wu \cite{WuWu} systematically proved the global well-posedness of the compressible viscous non-resistive MHD equations in
$\mathbb{R}^2$. Tan and Wang \cite{TW} later established the global existence of smooth solutions for the 3D compressible barotropic viscous non-resistive MHD system in the horizontally infinite flat layer
$\Omega = \mathbb{R}^2 \times (0,1)$. Their proof employs a two-tier energy method applied to the reformulated system in Lagrangian coordinates. Wu and Zhu \cite{WuZhu} were the first to study such a system on periodic domains using pure energy estimates.
Dong, Wu, and Zhai \cite{zhaixiaoping2021arxiv}, \cite{zhaixiaopingnon} proved the global existence of strong solutions for the
$2\frac{1}{2}$-D compressible non-resistive MHD equations with small initial data in Besov and Sobolev spaces, respectively. More recently, Wu and Zhai \cite{zhaixiaoping2022} established the global well-posedness of the compressible viscous non-resistive MHD equations in $\mathbb{T}^3$ under the assumption that the initial magnetic field is sufficiently close to an equilibrium state.
For weak solutions, Li and Sun \cite{LS2D} demonstrated the existence of global weak solutions for the 2D non-resistive compressible MHD equations. This result was later extended by \cite{LiZh1} to accommodate a density-dependent viscosity coefficient and a non-monotone pressure law.

 \vskip .1in

 \item
When $\mathbf{B} = \mathbf{0}$ and $\mu=\lambda=0$,  \eqref{m1} simplifies to the isentropic compressible Euler equations, which are known to develop finite-time singularities, such as shocks and cusps, from smooth initial data in all dimensions (1D, 2D, and 3D). This phenomenon has been extensively studied in the literature (see \cite{Buc1, Buc2, Buc3, Chr1, Chr2, Luk, Mer, Sid, Xin, Yin}).
\vskip .1in
  \item
When $\mathbf{B} = \mathbf{0}$ and $\mu>0$, Equation \eqref{m1} simplifies to the isentropic compressible Navier-Stokes equations, a system that has been widely studied in the mathematical literature. Notable contributions include works by Feng \cite{F04}, Hoff \cite{hoff1}, and Xin \cite{Xin4, Xin, xin7}, among others.

\vskip .1in
 \item
When  $\rho\equiv 0$ and $\mu>0$, $\sigma=0$, equation \eqref{m1} reduces to the viscous non-resistive incompressible MHD system. This system has been extensively studied in the literature, as evidenced by numerous investigations (see, for instance, \cite{abidi, chemin, fefferman1, fefferman2, lijinlu, LXZ, LiZh1, PZZu1, RWZ, Wa1, wujihong1, XZ, ZT}).
           \end{itemize}
\vskip .1in
To the best of the author's knowledge, establishing global solutions for the compressible viscous MHD equations, especially in the absence of magnetic diffusion, remains an open problem, even for small initial data in $\R^3$. This paper aims to address this open problem by investigating the Cauchy problem for the 3D compressible viscous MHD equations, with a specific focus on horizontal magnetic diffusion.
Our main findings are summarized in the following theorem.
\begin{theorem}\label{dingli1}
For any $(\rho_0-\bar\rho,\u_0, \b_0) \in H^3(\R^3)$, there exists a constant $\varepsilon>0$ such that if
\begin{align}\label{}
\|\rho_0-\bar\rho\|_{ H^3}+\|\u_0\|_{ H^3}+\|\b_0\|_{ H^3}\le \varepsilon,
\end{align}
then system \eqref{m1} with  \eqref{long} has a unique global strong solution $(\rho-\bar\rho,\u, \b)$ such that
\begin{align*}
&\rho-\bar{\rho}\in C([0,\infty );H^{3}),\quad \nabla\rho\in L^{2} (\R^+;H^{2}),\nn\\
&\u \in C([0,\infty );H^{3}),\quad\nabla\u\in L^{2} (\R^+;H^{3}),\nn\\
&\b\in C([0,\infty );H^{3}),\quad\nabla_h\b\in L^{2} (\R^+;H^{3}).
\end{align*}
Moreover,  for a constant $C>0$ and any $t > 0$,
\begin{align}\label{}
&\norm{\rho(t)-\bar\rho}{H^3}^2+\norm{\u(t)}{H^3}^2+\norm{\b(t)}{H^3}^2\nn\\
&\quad+\int_0^t\big(\norm{\nabla a}{H^{{2}}}^2+\mu\norm{\nabla\u}{H^3}^2+(\lambda+\mu)\norm{\div\u}{H^3}^2
+\sigma\norm{\nabla_h\b}{H^3}^2\big)\,d\tau\le C\varepsilon.
\end{align}
\end{theorem}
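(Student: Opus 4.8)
The plan is to combine a standard local existence theorem with uniform-in-time a priori estimates, and then close the gap by a continuation argument. Writing $a=\rho-\bar\rho$ for the density fluctuation, I would first recast \eqref{m1} as a system for $(a,\u,\b)$: the mass equation becomes $\p_t a+\bar\rho\,\div\u=-\u\cdot\na a-a\,\div\u$; dividing the momentum equation by $\rho$ gives $\p_t\u-\f{\mu}{\rho}\Delta\u-\f{\lambda+\mu}{\rho}\na\div\u+\f{P'(\rho)}{\rho}\na a=\mathbf{N}$, where $\mathbf{N}$ gathers the convection term, the Lorentz force $\f1\rho(\na\times\b)\times\b$, and the remaining lower-order pieces; and the magnetic equation keeps the form $\p_t\b-\sigma\Delta_h\b=\b\cdot\na\u-\u\cdot\na\b-\b\,\div\u$. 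Local existence and uniqueness in $C([0,T];H^3)$ for this hyperbolic--parabolic system follow from the usual linearization-and-fixed-point scheme, so the whole problem reduces to the global bound $\mathcal E(t)\le C\varepsilon$ for the energy-plus-dissipation functional $\mathcal E$ appearing on the left-hand side of the stated estimate.

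The heart of the matter is the a priori estimate, obtained by applying $\p^\alpha$ for $|\alpha|\le 3$ and testing the mass, momentum and magnetic equations against $\f{P'(\rho)}{\rho}\p^\alpha a$, $\p^\alpha\u$ and $\p^\alpha\b$, respectively; with $P'(\bar\rho)=1$ this weighting makes the pressure cross terms cancel the $\bar\rho\,\div\u$ contribution of the mass equation. Summing, I would exploit the fundamental MHD cancellation: the top-order part of $\int\p^\alpha\big((\na\times\b)\times\b\big)\cdot\p^\alpha\u$ cancels that of $\int\p^\alpha(\b\cdot\na\u)\cdot\p^\alpha\b$ after integration by parts, using $\div\b=0$ (at the $L^2$ level this is precisely $\int\b\cdot\na(\b\cdot\u)=0$). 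This produces, at each level, a differential inequality of the form $\f{d}{dt}\|(a,\u,\b)\|_{H^3}^2+\mu\|\na\u\|_{H^3}^2+(\lambda+\mu)\|\div\u\|_{H^3}^2+\sigma\|\na_h\b\|_{H^3}^2\lesssim(\text{cubic terms})$. Because the density carries no dissipation of its own, I would recover the missing $\|\na a\|_{H^2}^2$ by separately pairing the momentum equation (differentiated up to order two) with $\na a$: the term $\f{P'(\rho)}{\rho}|\na a|^2$ supplies the dissipation, the time-derivative contribution is rewritten as $\f{d}{dt}\int\u\cdot\na a+\cdots$, and the resulting errors are absorbed by the velocity dissipation.

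The genuine obstacle is the absence of vertical magnetic dissipation: the functional only furnishes $\sigma\|\na_h\b\|_{H^3}^2$, so every nonlinear contribution carrying a $\p_3\b$ factor must be absorbed by the \emph{velocity's} full dissipation together with the horizontal magnetic dissipation. Two ingredients are decisive here. First, the constraint $\div\b=0$ yields $\p_3 B_3=-\na_h\cdot\b_h$, so the vertical derivative of the third component is already controlled by horizontal derivatives; only $\p_3 B_1$ and $\p_3 B_2$ are truly dangerous. Second, the anisotropic Sobolev inequalities quoted earlier let me estimate trilinear integrals of the type $\int(\p_3\b)\,g\,h$ by distributing a horizontal derivative onto a companion factor, via bounds of the shape $\|fgh\|_{L^1}\lesssim\|f\|_{L^2}\|g\|_{L^2}^{1/2}\|\na_h g\|_{L^2}^{1/2}\|h\|_{L^2}^{1/2}\|\p_3 h\|_{L^2}^{1/2}$, so that each occurrence of a vertical magnetic derivative is paired with a factor carrying $\na\u$ or $\na_h\b$. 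The structural fact that $\p_3$ acting on $\b$ in the stretching term $\b\cdot\na\u$ is always compensated by a derivative landing on $\u$ --- which the momentum dissipation absorbs --- is used once more, now at the top order $|\alpha|=3$, and this is the step I expect to be the most delicate.

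With all nonlinear terms bounded by $\mathcal E(t)^{1/2}\mathcal D(t)$, where $\mathcal D$ is the full dissipation, a standard bootstrap closes the argument. Assuming $\sup_{[0,T]}\mathcal E\le\delta$ for a small $\delta$, the summed differential inequality integrates to $\mathcal E(t)+\int_0^t\mathcal D\,d\tau\le C\varepsilon+C\delta^{1/2}\int_0^t\mathcal D\,d\tau$, and choosing $\varepsilon$ (hence $\delta$) small enough absorbs the last term and gives $\mathcal E(t)+\int_0^t\mathcal D\,d\tau\le C\varepsilon$ uniformly in $T$. Feeding this uniform bound back into the local theory extends the solution to all times with the asserted regularity, and uniqueness follows from an $L^2$ estimate on the difference of two solutions. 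Throughout, the sharp treatment of $\p_3 B_1$ and $\p_3 B_2$ in the highest-order terms remains the crux on which global existence hinges.
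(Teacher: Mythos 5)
Your proposal follows essentially the same route as the paper: a top-order ($H^3$) energy estimate exploiting the $\nabla a$--$\div\u$ cancellation and the Lorentz-force--magnetic-stretching cancellation, recovery of the density dissipation $\|\nabla a\|_{H^2}^2$ from the momentum equation through the cross term $\frac{d}{dt}\int \nabla^k\u\cdot\nabla^k\nabla a\,dx$, anisotropic Sobolev inequalities to compensate for the missing vertical magnetic diffusion, and a small-data bootstrap to close. The only caveat is that the illustrative trilinear bound you display, $\|fgh\|_{L^1}\lesssim\|f\|_{L^2}\|g\|_{L^2}^{1/2}\|\nabla_h g\|_{L^2}^{1/2}\|h\|_{L^2}^{1/2}\|\partial_3 h\|_{L^2}^{1/2}$, is not a valid inequality in $\R^3$ as written (only one total derivative is distributed, whereas each factor must carry a half-derivative in a distinct direction, as in the paper's Lemma \ref{ani}); since you defer to the correct anisotropic lemmas for the actual estimates, this does not affect the viability of the plan.
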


This result is new and its proof is not trivial. There are several difficulties. The first is the whole space $\mathbb R^3$ setting. In contrast to the case of periodic domain, no Poincar\'e type inequality applies in $\mathbb R^3$. The second is the fact that the density equation has no dissipation or damping.  It is well known that solutions to transport equations without such regularization tend to grow in Sobolev spaces.  This paper exploits the coupling and interaction between the density $a(t):=\rho-\bar{\rho}$ and the velocity field to establish global uniform upper bound on $a$ in Sobolev spaces. As demonstrated in Proposition \ref{adis}, this coupling induces a wave structure that provides a smoothing and stabilizing effect on $a$. The third difficulty stems from the lack of vertical dissipation in the magnetic field. This makes the estimate of the nonlinearity more challenging. Various anisotropic inequalities are applied to the triple products from the nonlinearity to obtain optimal derivative distribution.\\

The next section details the proof of Theorem \ref{dingli1}

\section{Proof of Theorem \ref{dingli1}}
In this section, we present the proof of Theorem \ref{dingli1} in three subsections. First, we recall the following anisotropic inequalities, which will play a crucial role in the proof of our main result.
\begin{lemma}\label{ani}
For some constants $C>0$, $i,j,k=1,2,3$ and $i\neq j\neq k$, we have
\begin{align}
&\int_{\R^3}|fgh|\,dx\le C\| f\|_{L^2}^{\f 12}\|\p_1 f\|_{L^2}^{\f 12}\| g\|_{L^2}^{\f 12}\|\p_2 g\|_{L^2}^{\f 12}
\|h\|_{L^2}^{\f 12}\|\p_3 h\|_{L^2}^{\f 12};\label{Ine1}\\
&\int_{\R^3}|fgh|\,dx\le C\|f\|_{L^2}^{\f 14}\|\p_if\|_{L^2}^{\f 14} \|\p_jf\|_{L^2}^{\f 14} \|\p_i\p_jf\|_{L^2}^{\f 14}\|g\|_{L^2}^{\f 12} \|\p_kg\|_{L^2}^{\f12}\|h\|_{L^2}\label{Ine2}.
\end{align}
\end{lemma}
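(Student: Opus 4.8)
The plan is to prove the two anisotropic inequalities \eqref{Ine1} and \eqref{Ine2} by reducing three-dimensional integrals to one-dimensional estimates along each coordinate direction, combined with the fundamental theorem of calculus and the Cauchy--Schwarz inequality. The workhorse is the basic one-dimensional bound $\|w\|_{L^\infty(\R)}^2 \le 2\|w\|_{L^2(\R)}\|\p w\|_{L^2(\R)}$, which follows from $w(x)^2 = 2\int_{-\infty}^{x} w\,\p w\,ds$ together with Cauchy--Schwarz; the anisotropic structure of the right-hand sides is exactly what one obtains by applying this bound in one variable at a time and then integrating in the remaining variables with Hölder.

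For \eqref{Ine1}, the natural approach is to control each factor in the $L^\infty$ norm along a single distinguished direction and integrate the products over the slices. First I would bound, for fixed $(x_2,x_3)$, $\sup_{x_1}|f|^2 \le 2\|f(\cdot,x_2,x_3)\|_{L^2_{x_1}}\|\p_1 f(\cdot,x_2,x_3)\|_{L^2_{x_1}}$, and symmetrically take $g$ to $L^\infty$ in $x_2$ and $h$ to $L^\infty$ in $x_3$. Then $\int_{\R^3}|fgh|\,dx$ is estimated by integrating the three resulting slice-sup functions; a careful application of the Cauchy--Schwarz inequality successively in $x_1$, $x_2$, and $x_3$ collapses the triple integral into the product of the six $L^2(\R^3)$ norms with the stated exponents $\tfrac12$. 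The bookkeeping of which $L^2_{x_i}$ slice norm pairs with which integration variable is the only delicate point here, but it is purely a matter of organizing the Hölder applications.

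For \eqref{Ine2} the idea is similar but one factor, namely $f$, is pushed all the way to $L^\infty(\R^3)$ using a two-dimensional Gagliardo--Nirenberg/Sobolev-type embedding, which explains the four quarter-power factors $\|f\|_{L^2}^{1/4}\|\p_i f\|_{L^2}^{1/4}\|\p_j f\|_{L^2}^{1/4}\|\p_i\p_j f\|_{L^2}^{1/4}$: applying the one-dimensional $L^\infty$ bound in direction $i$ and then in direction $j$ (using that $\p_i$ and $\p_j$ commute) produces exactly this anisotropic $L^\infty$ control. After extracting $\|f\|_{L^\infty}$, the remaining integral $\int |gh|\,dx \le \|g\|_{L^2}\|h\|_{L^2}$ would be too lossy in $g$; instead I would take $g$ to $L^\infty$ only in the remaining direction $k$, giving $\|g\|_{L^2}^{1/2}\|\p_k g\|_{L^2}^{1/2}$, and leave $h$ in full $L^2(\R^3)$, then finish with Cauchy--Schwarz in the two free variables orthogonal to $k$.

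The main obstacle, and the step requiring the most care, is the two-step one-dimensional argument for the $L^\infty$ bound on $f$ in \eqref{Ine2}: after bounding $\sup_{x_i}|f|^2$ by an integral of $|f\,\p_i f|$ along the $x_i$-line, the resulting quantity is itself a function of the other two variables, and one must again apply the one-dimensional sup bound in $x_j$ to the functions $\|f\|_{L^2_{x_i}}^{2}$ and $\|\p_i f\|_{L^2_{x_i}}^{2}$ (or rather to suitable square roots), which forces the appearance of the mixed derivative $\p_i\p_j f$ and of both pure derivatives. Tracking the half-powers through this nested application, and verifying that the final Hölder step in the remaining variables produces precisely the quarter-powers claimed, is where the exponent arithmetic must be done honestly; everything else reduces to routine repeated use of Cauchy--Schwarz.
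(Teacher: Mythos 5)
The paper itself does not prove this lemma (it is quoted from Lemma 1.2 of \cite{zhuyiadv}), but the slice-wise strategy you describe --- the one-dimensional bound $\|w\|_{L^\infty(\R)}^2\le 2\|w\|_{L^2(\R)}\|\p w\|_{L^2(\R)}$ applied one variable at a time, followed by iterated Cauchy--Schwarz --- is indeed the standard route, and your treatment of \eqref{Ine1} is correct: bounding $|f|^2\le 2\int|f\,\p_1 f|\,dx_1$, $|g|^2\le 2\int|g\,\p_2 g|\,dx_2$, $|h|^2\le 2\int|h\,\p_3 h|\,dx_3$ produces three functions each depending on a complementary pair of variables, and the Loomis--Whitney-type iterated Cauchy--Schwarz closes exactly as you indicate.

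Your argument for \eqref{Ine2}, however, contains a step that fails on $\R^3$. The intermediate inequality you invoke, $\|f\|_{L^\infty(\R^3)}\le C\|f\|_{L^2}^{\f14}\|\p_if\|_{L^2}^{\f14}\|\p_jf\|_{L^2}^{\f14}\|\p_i\p_jf\|_{L^2}^{\f14}$, is false: none of the four norms on the right controls behavior in the $x_k$ direction. For $i=1$, $j=2$, $k=3$ take $f=\phi(x_1,x_2)\,n^{1/2}\chi(nx_3)$; all four right-hand norms are independent of $n$ while $\|f\|_{L^\infty}\sim n^{1/2}\to\infty$. What the nested one-dimensional argument actually yields is $F(x_k):=\sup_{x_i,x_j}|f|\le C\|f\|_{L^2_{x_ix_j}}^{\f14}\|\p_if\|_{L^2_{x_ix_j}}^{\f14}\|\p_jf\|_{L^2_{x_ix_j}}^{\f14}\|\p_i\p_jf\|_{L^2_{x_ix_j}}^{\f14}$, a genuine function of $x_k$, and this dependence must be retained: after pulling out $\sup_{x_k}\|g\|_{L^2_{x_ix_j}}\le C\|g\|_{L^2}^{\f12}\|\p_kg\|_{L^2}^{\f12}$ and applying Cauchy--Schwarz in $(x_i,x_j)$, one is left with $\int F(x_k)\,\|h\|_{L^2_{x_ix_j}}(x_k)\,dx_k$, which is closed by a \emph{final Cauchy--Schwarz in $x_k$} (absent from your outline), using $\|F\|_{L^2_{x_k}}\le C\|f\|_{L^2}^{\f14}\|\p_if\|_{L^2}^{\f14}\|\p_jf\|_{L^2}^{\f14}\|\p_i\p_jf\|_{L^2}^{\f14}$ obtained by H\"older in $x_k$ with four $L^4$ factors. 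Had you truly replaced $f$ by an $x_k$-independent constant, the remaining integral could not be bounded by $\|h\|_{L^2(\R^3)}$ over the unbounded $x_k$-line --- precisely a point where $\R^3$ differs from a periodic box. So the ingredients are right, but the order of operations must keep the $f$-factor inside the $x_k$-integration rather than extracting a global supremum.
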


This lemma is essentially established in Lemma 1.2 of \cite{zhuyiadv}; therefore, we omit the details. Next, we derive a priori estimates, which are crucial for establishing global solutions. We start with the fundamental energy estimates.
\subsection{Basic energy estimates} \label{bas}
Denote by $g(\rho)$ the potential energy density, namely
\begin{align}\label{high1}
g(\rho)=\rho\int_{\bar{\rho}}^\rho\frac{P(\tau)-P(\bar{\rho})}{\tau^2}\,d\tau.
\end{align}
For any fixed positive constant $c_0$, if $c_0\le\rho\le c_0^{-1}$, then
\begin{align}\label{high2}
g(\rho)\sim (\rho-\bar{\rho})^2.
\end{align}
The standard basic energy estimate gives
\begin{align}\label{high3}
\frac12\frac{d}{dt}\int_{\R^3}\left(2g(\rho)+\rho|\u|^2+|\b|^2\right)\,dx+\mu\norm{\nabla\u}{L^2}^2+ (\lambda+\mu)\norm{\div\u}{L^2}^2
+\sigma\norm{\nabla_h\b}{L^2}^2=0,
\end{align}
where we have used the following cancellations
\begin{align*}
&\int_{\R^3}\b\cdot\nabla\b\cdot\u\,dx+\int_{\R^3}\b\cdot\nabla\u\cdot\b\,dx=0,\nn\\
&\int_{\R^3}\b\nabla\b\cdot\u\,dx+\int_{\R^3}\u\cdot\nabla\b\cdot\b\,dx=0.
\end{align*}
Without loss of generality, we assume from now on that $\bar{\rho}=1$, so, we can define
 $$a\stackrel{\mathrm{def}}{=}\rho-\bar{\rho}{=}\rho-1,$$
then \eqref{high3} implies the result in the following proposition.
\begin{proposition}\label{high5}
Let $(a,\u,\b) \in C([0, T];H^3(\R^3))$ be a solution to the  system \eqref{m1}, there holds
\begin{align}\label{high6}
\frac12\frac{d}{dt}\norm{(a,\u,\b)}{L^2}^2+\mu\norm{\nabla\u}{L^2}^2+(\lambda+\mu)\norm{\div\u}{L^2}^2
+\sigma\norm{\nabla_h\b}{L^2}^2=0.
\end{align}
\end{proposition}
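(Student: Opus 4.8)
The plan is to derive the exact energy balance \eqref{high3} by the standard energy method and then pass to the stated form \eqref{high6} via the equivalence \eqref{high2}. First I would test the momentum equation against $\u$ and integrate over $\R^3$. The viscous terms give the dissipation $\mu\norm{\nabla\u}{L^2}^2+(\lambda+\mu)\norm{\div\u}{L^2}^2$ after one integration by parts, while the inertial part $\int_{\R^3}(\rho\partial_t\u+\rho\u\cdot\nabla\u)\cdot\u\,dx$ collapses to $\frac12\frac{d}{dt}\int_{\R^3}\rho|\u|^2\,dx$ once the continuity equation $\partial_t\rho+\div(\rho\u)=0$ is used to absorb the $\partial_t\rho$ term. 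For the pressure I would exploit the fact that the potential $g(\rho)$ in \eqref{high1} is constructed precisely so that $\int_{\R^3}\nabla P\cdot\u\,dx=\frac{d}{dt}\int_{\R^3}g(\rho)\,dx$; verifying this identity (again through the continuity equation, noting $P'(\bar\rho)=1$) is the first computational point that requires care.

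Next I would test the magnetic equation against $\b$. The horizontal diffusion yields $\sigma\norm{\nabla_h\b}{L^2}^2$, and the transport term gives $\int_{\R^3}\u\cdot\nabla\b\cdot\b\,dx=-\frac12\int_{\R^3}|\b|^2\div\u\,dx$, which combines with the compression term $\int_{\R^3}|\b|^2\div\u\,dx$ into a single cubic remainder. The structural heart of the proof, and what I expect to be the main obstacle, is the cancellation obtained when the two identities are summed: the magnetic stretching term $\int_{\R^3}\b\cdot\nabla\u\cdot\b\,dx$ must annihilate the work of the Lorentz force $\int_{\R^3}((\nabla\times\b)\times\b)\cdot\u\,dx$. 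Using $(\nabla\times\b)\times\b=\b\cdot\nabla\b-\frac12\nabla|\b|^2$, the Lorentz work splits into $\int_{\R^3}\b\cdot\nabla\b\cdot\u\,dx$ together with a term $\frac12\int_{\R^3}|\b|^2\div\u\,dx$ that exactly cancels the cubic remainder above; the leftover is $\int_{\R^3}\b\cdot\nabla\b\cdot\u\,dx+\int_{\R^3}\b\cdot\nabla\u\cdot\b\,dx=\int_{\R^3}\b\cdot\nabla(\b\cdot\u)\,dx=-\int_{\R^3}(\div\b)(\b\cdot\u)\,dx=0$ by the constraint $\div\b=0$. These are exactly the two cancellations recorded in the excerpt, and they are where the special algebraic structure of the MHD coupling is essential.

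Collecting all the contributions produces \eqref{high3}. To reach \eqref{high6} I would then invoke the equivalence $g(\rho)\sim a^2$ from \eqref{high2} together with the uniform bounds $c_0\le\rho\le c_0^{-1}$; both are available under the a priori smallness assumption, since $a\in H^3\hookrightarrow L^\infty$ keeps $\rho$ close to $1$, so that the energy functional $\int_{\R^3}(2g(\rho)+\rho|\u|^2+|\b|^2)\,dx$ is comparable to $\norm{(a,\u,\b)}{L^2}^2$. Under these bounds the balance \eqref{high3} is recorded in the normalized form \eqref{high6} that will be used in the subsequent higher-order estimates.
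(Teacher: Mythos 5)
Your proposal is correct and follows essentially the same route as the paper: the standard energy method built on the potential energy density $g(\rho)$ from \eqref{high1}, the two MHD cancellations (which you identify and verify explicitly, exactly as recorded after \eqref{high3}), and the passage to \eqref{high6} via the equivalence \eqref{high2}. The only caveat --- one shared with the paper itself --- is that $g(\rho)\sim a^2$ and $\rho|\u|^2\sim|\u|^2$ are equivalences up to constants, so the exact identity \eqref{high3} yields \eqref{high6} only as a comparable energy balance rather than a literal equality.
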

\vskip .1in

Next, we are concerned with the higher order nonlinear energy estimates.
\subsection{Higher order energy estimates} \label{high}
In this subsection, we shall prove  the following proposition.

\begin{proposition}\label{ed2}
Let $(a,\u,\b) \in C([0, T];H^3(\R^3))$ be a solution to the  system \eqref{m1}, there holds
\begin{align}\label{highenergy}
&\frac12\frac{d}{dt}\norm{(a,\u,\b)}{H^3}^2+\mu\norm{\nabla\u}{H^3}^2+(\lambda+\mu)\norm{\div\u}{H^3}^2
+\sigma\norm{\nabla_h\b}{H^3}^2 \nn\\
&\quad\le C(\| \u\|_{H^3}+\|(a,\u)\|_{H^3}^2+\| \b\|_{H^3}^4)\| \nabla a\|_{H^2}^2
+C(\|  a\|_{H^3}^2\|  \b\|_{H^3}^2+\|(a,\u,\b)\|_{H^3}^2)\| \nabla \u\|_{H^3}^2.
\end{align}
\end{proposition}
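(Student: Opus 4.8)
The plan is to run an $H^3$ energy estimate, applying $\p^\alpha$ for each multi-index with $|\alpha|\le3$, testing the continuity, momentum and magnetic equations against $\p^\alpha a$, $\p^\alpha\u$ and $\p^\alpha\b$ respectively, integrating over $\R^3$ and summing (the case $|\alpha|=0$ being Proposition \ref{high5}). First I would recast the system in $a=\rho-1$: the continuity equation reads $\p_t a+\div\u=-\u\cdot\nabla a-a\,\div\u$, and, dividing the momentum equation by $\rho=1+a$ (which stays close to $1$ since $\norm{a}{H^3}$ is small, so that $j(a):=\f{1}{1+a}-1$ and $h(a):=\f{P'(1+a)}{1+a}-1$ are smooth and vanish at $a=0$),
\begin{align*}
\p_t\u-\mu\Delta\u-(\lambda+\mu)\nabla\div\u+\nabla a=-\u\cdot\nabla\u-h(a)\nabla a+j(a)\big(\mu\Delta\u+(\lambda+\mu)\nabla\div\u\big)+(1+j(a))(\nabla\times\b)\times\b.
\end{align*}
Summing the tested equations, the constant-coefficient viscous and horizontal resistive terms yield exactly the dissipation $\mu\norm{\nabla\u}{H^3}^2+(\lambda+\mu)\norm{\div\u}{H^3}^2+\sigma\norm{\nabla_h\b}{H^3}^2$ on the left, and the linear pressure--compression coupling cancels since $\int_{\R^3}\nabla\p^\alpha a\cdot\p^\alpha\u\,dx+\int_{\R^3}\div\p^\alpha\u\,\p^\alpha a\,dx=0$ after integration by parts. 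It remains to bound the nonlinear integrals by the right-hand side of \eqref{highenergy}; note that each coefficient there carries a small $H^3$-prefactor, and that $\norm{\nabla a}{H^2}^2$ is admissible despite $a$ having no dissipation because it is recovered later through the wave structure of Proposition \ref{adis}.

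For the density and velocity nonlinearities I would use the Moser-type product and commutator calculus in $H^3$ together with the smallness of $\norm{(a,\u)}{H^3}$. The transport term $\p^\alpha(\u\cdot\nabla a)$ tested against $\p^\alpha a$ is integrated by parts in its top part to give $-\tfrac12\int_{\R^3}\div\u\,|\p^\alpha a|^2\,dx\lesssim\norm{\u}{H^3}\norm{\nabla a}{H^2}^2$, and the remaining commutators, the compression term $a\,\div\u$, the variable-coefficient viscous terms $j(a)(\mu\Delta\u+(\lambda+\mu)\nabla\div\u)$ and the pressure correction $h(a)\nabla a$ each factor into a small $H^3$-norm times one of the dissipative quantities $\norm{\nabla a}{H^2}^2$ or $\norm{\nabla\u}{H^3}^2$. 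The Lorentz force I split as $(\nabla\times\b)\times\b+j(a)(\nabla\times\b)\times\b$: the first piece is reserved for the cancellation below, while the $a$-dependent correction, once a derivative falls on $j(a)$ to produce a factor $\nabla a$, yields (after Young's inequality) the mixed contributions $\norm{\b}{H^3}^4\norm{\nabla a}{H^2}^2$ and $\norm{a}{H^3}^2\norm{\b}{H^3}^2\norm{\nabla\u}{H^3}^2$ appearing on the right of \eqref{highenergy}.

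The heart of the matter is the magnetic block, where the absence of vertical resistivity forbids absorbing any $\p_3\b$. Testing $\p^\alpha(\b\cdot\nabla\u-\b\,\div\u-\u\cdot\nabla\b)$ against $\p^\alpha\b$ and $\p^\alpha((\nabla\times\b)\times\b)$ against $\p^\alpha\u$, the single dangerous term is the one in which every derivative lands on $\u$, namely $\int_{\R^3}(\b\cdot\nabla\p^\alpha\u)\cdot\p^\alpha\b\,dx$: here $\u$ already carries the top order $|\alpha|+1$ derivatives, so no room is left to shift the vertical derivative onto it and the term cannot be absorbed. Precisely this term is killed by the divergence-free cancellation already used for \eqref{high6}, through $\int_{\R^3}\b\cdot\nabla(\p^\alpha\u\cdot\p^\alpha\b)\,dx=0$ (and its $\tfrac12|\b|^2\div\u$ analogue), against the matching Lorentz term $\int_{\R^3}(\b\cdot\nabla\p^\alpha\b)\cdot\p^\alpha\u\,dx$. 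In every remaining commutator $\u$ carries at most $|\alpha|$ derivatives, leaving exactly the room needed to place the vertical derivative on $\u$. I would then invoke Lemma \ref{ani}: each surviving triple product $\int_{\R^3}|fgh|\,dx$ is split by \eqref{Ine1}--\eqref{Ine2} so that $\b$ is differentiated only horizontally while the single $\p_3$ falls on $\u$. A representative worst case is the commutator $\int_{\R^3}|\p^\alpha\b|\,|\nabla\u|\,|\p^\alpha\b|\,dx$, for which \eqref{Ine1} with $f=g=\p^\alpha\b$ and $h=\nabla\u$ gives
\begin{align*}
\int_{\R^3}|\p^\alpha\b|\,|\nabla\u|\,|\p^\alpha\b|\,dx\le C\norm{\p^\alpha\b}{L^2}\norm{\p_1\p^\alpha\b}{L^2}^{\f12}\norm{\p_2\p^\alpha\b}{L^2}^{\f12}\norm{\nabla\u}{L^2}^{\f12}\norm{\p_3\nabla\u}{L^2}^{\f12}\lesssim\norm{\b}{H^3}\norm{\nabla_h\b}{H^3}\norm{\nabla\u}{H^3},
\end{align*}
after which Young's inequality absorbs $\norm{\nabla_h\b}{H^3}$ into the dissipation and leaves $\norm{\b}{H^3}^2\norm{\nabla\u}{H^3}^2$, matching the coefficient of $\norm{\nabla\u}{H^3}^2$ in \eqref{highenergy}.

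I expect the decisive difficulty to be exactly this derivative accounting at $|\alpha|=3$ in the magnetic block: one must verify for every surviving commutator that, after the cancellation, $\b$ can always be assigned purely horizontal derivatives, and that \eqref{Ine1}--\eqref{Ine2} admit an exponent distribution placing each factor under $\norm{\nabla\u}{H^3}$, $\norm{\nabla_h\b}{H^3}$ or $\norm{\nabla a}{H^2}$ up to a small $H^3$-prefactor. Getting the powers of these prefactors to reproduce the precise right-hand side of \eqref{highenergy} is the delicate bookkeeping; once it is in place, the summation over $|\alpha|\le3$ and the final Young inequalities are routine.
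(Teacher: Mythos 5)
Your proposal follows essentially the same route as the paper: the same reformulation in terms of $a$, $I(a)$ (your $j(a)$) and $J(a)$ (your $h(a)$), the same $H^3$ energy estimate with the pressure--compression cancellation, the same symmetric cancellation of $\int_{\R^3}\b\cdot\nabla\nabla^3\b\cdot\nabla^3\u\,dx$ against $\int_{\R^3}\b\cdot\nabla\nabla^3\u\cdot\nabla^3\b\,dx$, and the same use of the anisotropic inequalities \eqref{Ine1}--\eqref{Ine2} to keep all derivatives on $\b$ horizontal while the single $\p_3$ lands on $\u$. One small diagnostic slip that does not affect the argument: the term that genuinely cannot be absorbed is the Lorentz-side one $\int_{\R^3}\b\cdot\nabla\nabla^3\b\cdot\nabla^3\u\,dx$ (it contains the uncontrolled $\p_3\nabla^3\b$), whereas $\int_{\R^3}\b\cdot\nabla\nabla^3\u\cdot\nabla^3\b\,dx$ could in fact be bounded directly by \eqref{Ine2} since $\nabla^4\u$ is controlled by the full dissipation $\norm{\nabla\u}{H^3}$ --- but both disappear in the cancellation you invoke, so the proof is unaffected.
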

\begin{proof}
 To simplify  the notations, we introduce the new unknowns:
$$ I(a)\stackrel{\mathrm{def}}{=}\frac{a}{1+a},\quad\hbox{and}\quad J(a)\stackrel{\mathrm{def}}{=}\frac{P'(1+a)}{1+a}-1.$$
Then \eqref{m1} with \eqref{long} can be reformulated as
\begin{eqnarray}\label{m3}
\left\{\begin{aligned}
&\partial_t a+ \div\u  =f_1,\\
&\partial_t \u-\mu\Delta\u-(\lambda+\mu)\nabla\div\u+\nabla a
=f_2,\\
&\partial_t \b-\sigma\Delta_h\b=f_3,\\
&\div \b =0,\\
&(a,\u,\b)|_{t=0}=(a_0,\u_0,\b_0),
\end{aligned}\right.
\end{eqnarray}
where $f_1$, $f_2$ and $f_3$  are nonlinear terms defined as
\begin{align*}
f_1\stackrel{\mathrm{def}}{=}&-\u\cdot\nabla a-a\div \u,\nn\\
f_2\stackrel{\mathrm{def}}{=}&-\u\cdot\nabla \u+\b\cdot\nabla\b-\b\nabla\b+J(a)\nabla a
\nn\\
&-I(a)(\mu\Delta\u+(\lambda+\mu)\nabla\div\u)-I(a)(\b\cdot\nabla\b-\b\nabla\b),\nn\\
f_3\stackrel{\mathrm{def}}{=}&-\u\cdot\nabla\b+\b\cdot\nabla\u-\b\div\u.
\end{align*}
 Due to Proposition \ref{high5} and the equivalence of the norm
 \begin{align}\label{high7}
\|(a(t),\u(t),\b(t))\|_{H^{3}}\thickapprox\|(a(t),\u(t),\b(t))\|_{L^2}+\|(a(t),\u(t),\b(t))\|_{\dot{H}^{3}},
\end{align}
it suffices to bound $\|(a(t),\u(t),\b(t))\|_{\dot{H}^{3}}$ in the following argument.
Applying  $\la^3$  to the first three equations of \eqref{m3} and then taking $L^2$ inner product with $(\la^3a, \la^3\u,\la^3\b)$ yields
\begin{align}\label{high8}
&\frac12\frac{d}{dt}\norm{(\la^3a,\la^3\u,\la^3\b)}{L^2}^2 +\mu\norm{\la^3\nabla\u}{L^2}^2+(\lambda+\mu)\norm{\la^3\div\u}{L^2}^2+\sigma\norm{\la^3\nabla_h\b}{L^2}^2 \nn\\
&\quad= \int_{\R^3}\la^{3} f_1\cdot\la^{3} a\,dx+\int_{\R^3}\la^{3} f_2\cdot\la^{3} \u\,dx+\int_{\R^3}\la^{3} f_3\cdot\la^{3} \b\,dx,
\end{align}
due to the following cancellation
\begin{align}\label{high9}
&\int_{\R^3}\la^3 \div\u\cdot\la^3 a\,dx+\int_{\R^3}\la^3 \nabla a\cdot\la^3 \u\,dx=0.
\end{align}
The following task is to estimate the nonlinear terms on the right hand side of \eqref{high8}.
To begin with, we write
\begin{align}\label{high10}
\int_{\R^3}\la^{3} f_1\cdot\la^{3} a\,dx
=&-\int_{\R^3} {\nabla}^3 (\u\cdot \nabla a)\cdot {\nabla}^3 a \; dx -\int_{\R^3} {\nabla}^3 ( a\div\u)\cdot {\nabla}^3 a \; dx\nn\\
=& -\int_{\R^3} \u\cdot \nabla{\nabla}^3  a\cdot {\nabla}^3 a \; dx-\sum_{{k} = 0}^2\int_{\R^3} {\nabla}^{3-{k} } \u\cdot  {\nabla}^{{k} }\nabla a\cdot {\nabla}^3 a \; dx\nn\\
&-\int_{\R^3} {\nabla}^3 ( a\div\u)\cdot {\nabla}^3 a \; dx.
\end{align}
By using the integration by parts and the H\"older inequality, the first term  on the right hand side of \eqref{high10} can be controlled by
\begin{align}\label{high11}
 -\int_{\R^3} \u\cdot \nabla{\nabla}^3  a\cdot {\nabla}^3 a \; dx
 =&\frac12\int_{\R^3} \div\u ({\nabla}^3  a)^2 \; dx\nn\\
 \le& C\|\div\u\|_{L^\infty}\|{\nabla}^3  a\|_{L^2}^2\nn\\
  \le& C\|\u\|_{H^3}\|\nabla a\|_{H^2}^2.
\end{align}
Similarly,
\begin{align}\label{high12}
&-\sum_{{k} = 0}^2\int_{\R^3} {\nabla}^{3-{k} } \u\cdot  {\nabla}^{{k} }\nabla a\cdot {\nabla}^3 a \; dx\nn\\
&\quad\le C\|{\nabla}^{3} \u\|_{L^2}\|\nabla a\|_{L^\infty}\|{\nabla}^3 a\|_{L^2}\nn\\
&\qquad+ C\|{\nabla}^{2} \u\|_{L^3}\|\nabla^2 a\|_{L^6}\|{\nabla}^3 a\|_{L^2}+C\|{\nabla} \u\|_{L^\infty}\|{\nabla}^2\nabla a\|_{L^2}\|{\nabla}^3 a\|_{L^2}\nn\\
&\quad\le C\|\u\|_{H^3}\|\nabla a\|_{H^2}^2.
\end{align}
For the last term in \eqref{high10}, it's not hard to check that
\begin{align}\label{}
 -\int_{\R^3} {\nabla}^3 ( a\div\u)\cdot {\nabla}^3 a \; dx
=&-\int_{\R^3} a{\nabla}^3 \div\u {\nabla}^3 a \; dx -\int_{\R^3} {\nabla}^{3 } a \div\u\cdot {\nabla}^3 a \; dx\nn\\
&-\sum_{{k} = 1}^2\mathbb{C}_{3}^k\int_{\R^3} {\nabla}^{3-{k} } a  {\nabla}^{{k} }\div\u\cdot {\nabla}^3 a \; dx.
\end{align}
Now, with an aid of the H\"older inequality and the Young inequality, there hold
\begin{align*}
-\int_{\R^3} a{\nabla}^3 \div\u {\nabla}^3 a \; dx
\le& C\| a\|_{L^\infty}\|{\nabla}^{3 }\div\u\|_{L^2}\|{\nabla}^3 a\|_{L^2}\nn\\
\le& \frac{\mu}{16}\|{\nabla}^{3 }\div\u\|_{L^2}^2+C\| a\|_{L^\infty}^2\|{\nabla}^3 a\|_{L^2}^2\nn\\
\le&\frac{\mu}{16}\|\nabla\u\|_{H^3}^2+C\| a\|_{H^2}^2\|\nabla a\|_{H^2}^2;\nn\\
-\int_{\R^3} {\nabla}^{3 } a \div\u\cdot {\nabla}^3 a \; dx
\le& C\| \div\u\|_{L^\infty}\|{\nabla}^{3 }a\|_{L^2}^2
\le C\| \u\|_{H^3}\|\nabla a\|_{H^2}^2;\nn\\
\end{align*}
and
\begin{align*}
&-\sum_{{k} = 1}^2\mathbb{C}_{3}^k\int_{\R^3} {\nabla}^{3-{k} } a  {\nabla}^{{k} }\div\u\cdot {\nabla}^3 a \; dx\nn\\
&\quad\le
C\|{\nabla}^{2 } a\|_{L^6}\|{\nabla}\div\u\|_{L^3}\|{\nabla}^3 a\|_{L^2}
+C\|{\nabla} a\|_{L^\infty}\|{\nabla}^{2 }\div\u\|_{L^2}\|{\nabla}^3 a\|_{L^2}\nn\\
&\quad\le
C\|{\nabla}^{2 } a\|_{H^1}\|{\nabla}\div\u\|_{H^1}\|{\nabla}^3 a\|_{L^2}
+C\|{\nabla} a\|_{H^2}\|{\nabla}^{2 }\div\u\|_{L^2}\|{\nabla}^3 a\|_{L^2}\nn\\
&\quad\le C\| a\|_{H^3}\|\nabla \u\|_{H^2}\|\nabla a\|_{H^2}\nn\\
&\quad\le\frac{\mu}{16}\|\nabla\u\|_{H^3}^2+C\| a\|_{H^3}^2\|\nabla a\|_{H^2}^2
\end{align*}
from which, we can get
\begin{align}\label{high13}
 -\int_{\R^3} {\nabla}^3 ( a\div\u)\cdot {\nabla}^3 a \; dx
\le& \frac{\mu}{8}\|\nabla\u\|_{H^3}^2+C\| a\|_{H^3}^2\|\nabla a\|_{H^2}^2+C\| \u\|_{H^3}\|\nabla a\|_{H^2}^2.
\end{align}
The combination of \eqref{high11}, \eqref{high12}, and \eqref{high13} leads to
\begin{align}\label{high14}
\int_{\R^3}\la^{3} f_1\cdot\la^{3} a\,dx
\le \frac{\mu}{4}\|\nabla\u\|_{H^3}^2+C(\| a\|_{H^3}^2+\| \u\|_{H^3})\|\nabla a\|_{H^2}^2.
\end{align}
In order to bound   terms in $f_2$, we  write
\begin{align}\label{high15}
\int_{\R^3}\la^{3} f_2\cdot\la^{3} \u\,dx=\sum_{i=1}^7D_i
\end{align}
with
\begin{align}\label{high169}
D_1\stackrel{\mathrm{def}}{=}&-\int_{\R^3}\la^{3} (\u\cdot\nabla \u)\cdot\la^{3} \u\,dx,\quad
\hspace{0.47cm} D_2\stackrel{\mathrm{def}}{=}\int_{\R^3}\la^{3} (\b\cdot\nabla\b)\cdot\la^{3} \u\,dx,\nn\\
D_3\stackrel{\mathrm{def}}{=}&-\int_{\R^3}\la^{3} (\b\nabla\b)\cdot\la^{3} \u\,dx,\quad
\hspace{0.62cm} D_4\stackrel{\mathrm{def}}{=}\int_{\R^3}\la^{3} (J(a)\nabla a)\cdot\la^{3} \u\,dx,\nn\\
D_5\stackrel{\mathrm{def}}{=}&\int_{\R^3}\la^{3} (I(a)(\b\nabla\b))\cdot\la^{3} \u\,dx,\quad
D_6\stackrel{\mathrm{def}}{=}-\int_{\R^3}\la^{3} (I(a)(\b\cdot\nabla\b))\cdot\la^{3} \u\,dx,\nn\\
D_7\stackrel{\mathrm{def}}{=}&-\int_{\R^3}\la^{3} (I(a)(\mu\Delta\u+(\lambda+\mu)\nabla\div\u))\cdot\la^{3} \u\,dx.
\end{align}
Bounding the nonlinear terms involving composition functions in
$D_4$, $D_5$, $D_6$ and $D_7$
  requires a more intricate approach. To facilitate this analysis, we consistently assume that
\begin{equation}\label{eq:smallad }
\sup_{t \in \mathbb{R}_+, x \in \mathbb{R}^3} |a(t, x)| \leq \frac{1}{2}.
\end{equation}
This assumption is justified by the embedding
$H^2(\R^3)\hookrightarrow L^\infty(\R^3)$, which ensures that \eqref{eq:smallad } holds provided the solution constructed here has a sufficiently small norm in
$H^2(\R^3)$. Consequently, applying the composite function lemma (see, for instance, Lemma 2.6 in \cite{zhaixiaoping2022}), we obtain the following estimate:
\begin{equation}\label{eq:smalla}
\|(I(a),J(a))\|_{H^s}\le C\|a\|_{H^s}, \quad\hbox{for any $s>0$}.
\end{equation}
Now we turn to estimate $D_1, \cdot\cdot\cdot, D_7$.
 At first, it follows from the H\"older inequality and the Young inequality that
\begin{align}\label{high16}
D_1 =& -\int_{\R^3} {\nabla}^3 (\u\cdot \nabla \u)\cdot {\nabla}^3 \u \; dx\nn\\
\le& C\|{\nabla}^2 (\u\cdot \nabla \u)\|_{L^2}\|{\nabla}^4 \u\|_{L^2}\nn\\
\le&\frac{\mu}{16}\|{\nabla}^4 \u\|_{L^2}^2+ C\|\u\cdot \nabla \u\|_{H^2}^2\nn\\
\le&\frac{\mu}{16}\|\nabla\u\|_{H^3}^2+ C\|\u\|_{H^3}^2\|\nabla \u\|_{H^3}^2.
\end{align}
 In the same fashion, one can show that
\begin{align}\label{high20}
D_4=&\int_{\R^3} {\nabla}^3 (J(a)\nabla a)\cdot {\nabla}^3 \u \; dx\nn\\
=&-\int_{\R^3} {\nabla}^2 (J(a)\nabla a)\cdot {\nabla}^4 \u \; dx\nn\\
\le&C\| J(a)\nabla a\|_{H^2}\| {\nabla} \u\|_{H^3}\nn\\
\le&\frac{\mu}{16}\| \nabla \u\|_{H^3}^2+C\|  a\|_{H^3}^2\| \nabla a\|_{H^2}^2,
\end{align}
and
\begin{align}\label{high21}
D_7=&\int_{\R^3} {\nabla}^3 (I(a)(\mu\Delta\u+(\lambda+\mu)\nabla\div\u))\cdot {\nabla}^3 \u \; dx\nn\\
=&-\int_{\R^3} {\nabla}^2 (I(a)(\mu\Delta\u+(\lambda+\mu)\nabla\div\u))\cdot {\nabla}^4 \u \; dx\nn\\
\le&C\| I(a)(\mu\Delta\u+(\lambda+\mu)\nabla\div\u)\|_{H^2}\| {\nabla} \u\|_{H^3}\nn\\
\le&\frac{\mu}{16}\| \nabla \u\|_{H^3}^2+C\|  a\|_{H^3}^2\| \nabla \u\|_{H^3}^2.
\end{align}
Next, we focus on the rest terms $D_2,$ $D_3,$ $D_5$, and $D_6.$ At first, by Leibniz's rule, we split $D_2$ into two terms
\begin{align}\label{high17}
D_2& =  \sum_{k = 0}^2 \mathbb{C}_{3}^k\int_{\R^3}{\nabla}^{3-k} \b \cdot \na{\nabla}^{k} \b\cdot {\nabla}^3 \u \, dx
+ \int_{\R^3} \b \cdot \nabla {\nabla}^3\b \cdot {\nabla}^3\u \, dx.
\end{align}
It then follows from Lemma \ref{ani} directly that
\begin{align*}
&\sum_{k = 0}^2 \mathbb{C}_{3}^k\int_{\R^3}{\nabla}^{3-k} \b \cdot \na{\nabla}^{k} \b\cdot {\nabla}^3 \u \, dx\nn\\
&\quad\le C\sum_{k = 0}^2\| {\nabla}^{3-k} \b\|_{L^2}^{\f 12}\|\p_1 {\nabla}^{3-k} \b\|_{L^2}^{\f 12}\| \na{\nabla}^{k} \b\|_{L^2}^{\f 12}\|\p_2 \na{\nabla}^{k} \b\|_{L^2}^{\f 12}
\|{\nabla}^3 \u\|_{L^2}^{\f 12}\|\p_3 {\nabla}^3 \u\|_{L^2}^{\f 12}\nn\\
&\quad\le  C\|  \b\|_{H^3}^{\f 12}\| \nabla_h \b\|_{H^3}^{\f 12}\|  \b\|_{H^3}^{\f 12}\| \nabla_h \b\|_{H^3}^{\f 12}\| \nabla \u\|_{H^2}^{\f 12}\| \nabla \u\|_{H^3}^{\f 12}\nn\\
&\quad\le  C\|  \b\|_{H^3}\| \nabla_h \b\|_{H^3}\| \nabla \u\|_{H^3}\nn\\
&\quad\le  \frac{\sigma}{16}\| \nabla_h \b\|_{H^3}^2+C\|  \b\|_{H^3}^2\| \nabla \u\|_{H^3}^2
\end{align*}
from which  one has
\begin{align}\label{high18}
D_2\le  \frac{\sigma}{16}\| \nabla_h \b\|_{H^3}^2+C\|  \b\|_{H^3}^2\| \nabla \u\|_{H^3}^2
+ \int_{\R^3} \b \cdot \nabla {\nabla}^3\b \cdot {\nabla}^3\u \, dx.
\end{align}
Now we turn to the next term  $D_3$. We use the integration by parts and Lemma \ref{ani} to get
\begin{align}\label{high19}
&D_3
=\int_{\R^3} {\nabla}^3 (\frac12 |\b|^2)\cdot {\nabla}^3 \div\u \; dx\nn\\
&\quad\le C\Big|\int_{\R^3}(\b{\nabla}^{3} \b+{\nabla}\b{\nabla}^{2}\b  )\cdot {\nabla}^3 \div\u \, dx\Big|\nn\\
&\quad\le C\|\b\|_{L^2}^{\f 14}\|\p_1\b\|_{L^2}^{\f 14} \|\p_3\b\|_{L^2}^{\f 14} \|\p_1\p_3\b\|_{L^2}^{\f 14}\|{\nabla}^{3} \b\|_{L^2}^{\f 12} \|\p_2{\nabla}^{3} \b\|_{L^2}^{\f12}\|{\nabla}^3 \div\u\|_{L^2}\nn\\
&\qquad+C\|{\nabla} \b\|_{L^2}^{\f 14}\|\p_1{\nabla} \b\|_{L^2}^{\f 14} \|\p_3{\nabla} \b\|_{L^2}^{\f 14} \|\p_1\p_3{\nabla} \b\|_{L^2}^{\f 14}\|{\nabla}^{2} \b\|_{L^2}^{\f 12} \|\p_2{\nabla}^{2} \b\|_{L^2}^{\f12}\|{\nabla}^3 \div\u\|_{L^2}\nn\\
&\quad\le C\| \nabla \u\|_{H^3}\| \nabla_h \b\|_{H^3}\| \b\|_{H^3}\nn\\
&\quad\le \frac{\sigma}{16}\| \nabla_h \b\|_{H^3}^2+C\|  \b\|_{H^3}^2\| \nabla \u\|_{H^3}^2.
\end{align}
In analogy with before, $D_5$ could be divided into two terms
\begin{align}\label{high22}
&D_5=\int_{\R^3} {\nabla}^3 (I(a)\b\nabla\b)\cdot {\nabla}^3 \u \; dx\nn\\
&\quad=\sum_{k = 0}^2 \mathbb{C}_{3}^k\int_{\R^3}{\nabla}^{3-k} I(a){\nabla}^{k}(\b\nabla\b) \cdot {\nabla}^3 \u \; dx+\int_{\R^3} I(a){\nabla}^3 (\b\nabla\b)\cdot {\nabla}^3 \u \; dx\nn\\
&\quad\stackrel{\mathrm{def}}{=}D_{5,1}+D_{5,2}.
\end{align}
With an aid of the H\"older inequality and the Young inequality, there holds
\begin{align}\label{high23}
D_{5,1}=&\sum_{k = 0}^2 \mathbb{C}_{3}^k\int_{\R^3}{\nabla}^{3-k} I(a){\nabla}^{k}(\b\nabla\b) \cdot {\nabla}^3 \u \; dx\nn\\
\le&\sum_{k = 0}^2\|{\nabla}^{3-k} I(a)\|_{L^2}\|{\nabla}^{k}(\b\nabla\b)\|_{L^\infty}\|{\nabla}^{3} \u\|_{L^2}\nn\\
\le& C\|{\nabla}^3 a\|_{L^2}\|\b\nabla\b\|_{L^\infty}\|{\nabla}^{3} \u\|_{L^2}+C\|{\nabla}^2 a\|_{L^6}\|{\nabla}(\b\nabla\b)\|_{L^2}\|{\nabla}^{3} \u\|_{L^2}\nn\\
&\quad+C\|{\nabla} a\|_{L^\infty}\|{\nabla}^2(\b\nabla\b)\|_{L^2}\|{\nabla}^{3} \u\|_{L^2}\nn\\
\le& C\| \nabla a\|_{H^2}\| \b\|_{H^3}^2\| \nabla \u\|_{H^3}
\nn\\
\le&\frac{\mu}{16}\| \nabla \u\|_{H^3}^2+C\| \b\|_{H^3}^4\| \nabla a\|_{H^2}^2.
\end{align}
By using the integration by parts, we further write $D_{5,2}$ into
\begin{align}\label{high24}
D_{5,2}
=&-\int_{\R^3}\nabla I(a){\nabla}^3 (\frac12 |\b|^2)\cdot {\nabla}^3 \u \; dx-\int_{\R^3} I(a){\nabla}^3 (\frac12 |\b|^2)\cdot {\nabla}^3 \div\u \; dx.
\end{align}
Now, it follows from the H\"older inequality and \eqref{high19} that
\begin{align*}
-\int_{\R^3}\nabla I(a){\nabla}^3 (\frac12 |\b|^2)\cdot {\nabla}^3 \u \; dx
\le& C\|\nabla I(a)\|_{L^\infty}\Big|\int_{\R^3}(\b{\nabla}^{3} \b+{\nabla}\b{\nabla}^{2}\b  )\cdot {\nabla}^3\u \, dx\Big|\nn\\
\le& C\|\nabla I(a)\|_{H^2}\| \nabla \u\|_{H^3}\| \nabla_h \b\|_{H^3}\| \b\|_{H^3}\nn\\
\le&\frac{\sigma}{16}\| \nabla_h \b\|_{H^3}^2+C\|  a\|_{H^3}^2\|  \b\|_{H^3}^2\| \nabla \u\|_{H^3}^2,
\end{align*}
and
\begin{align*}
-\int_{\R^3} I(a){\nabla}^3 (\frac12 |\b|^2)\cdot {\nabla}^3 \div\u \; dx
\le& C\| I(a)\|_{L^\infty}\Big|\int_{\R^3} {\nabla}^3 (\frac12 |\b|^2)\cdot {\nabla}^3 \div\u \; dx\Big|\nn\\
\le&\frac{\sigma}{16}\| \nabla_h \b\|_{H^3}^2+C\|  a\|_{H^3}^2\|  \b\|_{H^3}^2\| \nabla \u\|_{H^3}^2.
\end{align*}
Taking the above two estimates  back to \eqref{high24} and combining with  \eqref{high23}, we obtain that
\begin{align*}
D_5
\le&\frac{\sigma}{8}\| \nabla_h \b\|_{H^3}^2+\frac{\mu}{16}\| \nabla \u\|_{H^3}^2+C\| \b\|_{H^3}^4\| \nabla a\|_{H^2}^2
+C\|  a\|_{H^3}^2\|  \b\|_{H^3}^2\| \nabla \u\|_{H^3}^2.
\end{align*}
The term $D_6$ can be estimated similarly, thus, we can get
\begin{align}\label{high28}
\int_{\R^3}\la^{3} f_2\cdot\la^{3} \u\,dx
\le& \int_{\R^3} \b \cdot \nabla {\nabla}^3\b \cdot {\nabla}^3\u \, dx+\frac{\mu}{4}\| \nabla \u\|_{H^3}^2+\frac{\sigma}{4}\| \nabla_h \b\|_{H^3}^2\\
&+C(\|  a\|_{H^3}^2+\| \b\|_{H^3}^4)\| \nabla a\|_{H^2}^2
+C(\|  a\|_{H^3}^2\|  \b\|_{H^3}^2+\| ( a,\u,\b)\|_{H^3}^2)\| \nabla \u\|_{H^3}^2.\nn
\end{align}

Finally, we deal with the term $\int_{\R^3}\la^{3} f_3\cdot\la^{3} \b\,dx$ in \eqref{high8}. Recall that
$$f_3\stackrel{\mathrm{def}}{=}-\u\cdot\nabla\b+\b\cdot\nabla\u-\b\div\u,$$
we can then use  Leibniz's rule again to write
\begin{align}\label{high29}
 -\int_{\R^3} {\nabla}^3 (\u\cdot \nabla \b)\cdot {\nabla}^3 \b \; dx
=& -\int_{\R^3} \u\cdot \nabla{\nabla}^3  \b\cdot {\nabla}^3 \b \; dx-\sum_{{k} = 0}^2\int_{\R^3} {\nabla}^{3-{k} } \u\cdot  {\nabla}^{{k} }\nabla \b\cdot {\nabla}^3 \b \; dx.
\end{align}
Thanks to integration by parts and Lemma \ref{ani}, there holds
\begin{align}\label{high30}
 -\int_{\R^3} \u\cdot \nabla{\nabla}^3  \b\cdot {\nabla}^3 \b \; dx
 =& \frac12\int_{\R^3} \div\u ({\nabla}^3  \b)^2 \; dx\nn\\
 \le&  C\| {\nabla}^3  \b\|_{L^2}^{\f 12}\|\p_1 {\nabla}^3  \b\|_{L^2}^{\f 12}\| {\nabla}^3  \b\|_{L^2}^{\f 12}\|\p_2 {\nabla}^3  \b\|_{L^2}^{\f 12}
\|\div\u\|_{L^2}^{\f 12}\|\p_3 \div\u\|_{L^2}^{\f 12}\nn\\
\le&  C\|\nabla\u\|_{H^3}\|\b\|_{H^3}\|\nabla_h\b\|_{H^3}\nn\\
\le&\frac{\sigma}{16}\| \nabla_h \b\|_{H^3}^2+C\|  \b\|_{H^3}^2\| \nabla \u\|_{H^3}^2.
\end{align}
For the last term in \eqref{high29}, it follows from  Lemma \ref{ani} once again and the Young inequality that
\begin{align}\label{high31}
&-\sum_{{k} = 0}^2\int_{\R^3} {\nabla}^{3-{k} } \u\cdot  {\nabla}^{{k} }\nabla \b\cdot {\nabla}^3 \b \; dx\nn\\
&\quad\le  C\sum_{{k} = 0}^2\| {\nabla}^{{k} }\nabla \b\|_{L^2}^{\f 12}\|\p_1 {\nabla}^{{k} }\nabla \b\|_{L^2}^{\f 12}\| {\nabla}^3  \b\|_{L^2}^{\f 12}\|\p_2 {\nabla}^3  \b\|_{L^2}^{\f 12}
\|{\nabla}^{3-{k} } \u\|_{L^2}^{\f 12}\|\p_3 {\nabla}^{3-{k} } \u\|_{L^2}^{\f 12}\nn\\
&\quad\le  C\|\nabla\u\|_{H^3}\|\b\|_{H^3}\|\nabla_h\b\|_{H^3}\nn\\
&\quad\le\frac{\sigma}{16}\| \nabla_h \b\|_{H^3}^2+C\|  \b\|_{H^3}^2\| \nabla \u\|_{H^3}^2.
\end{align}
Hence, the combination \eqref{high30} with \eqref{high31} gives rise to
\begin{align}\label{high32}
 -\int_{\R^3} {\nabla}^3 (\u\cdot \nabla \b)\cdot {\nabla}^3 \b \; dx
\le\frac{\sigma}{8}\| \nabla_h \b\|_{H^3}^2+C\|  \b\|_{H^3}^2\| \nabla \u\|_{H^3}^2.
\end{align}
Similarly, for the second term in $f_3$, there holds the following equality
\begin{align}\label{high33}
\int_{\R^3} {\nabla}^3 (\b\cdot \nabla \u)\cdot {\nabla}^3 \b \; dx& =  \sum_{k = 0}^2 \mathbb{C}_{3}^k\int_{\R^3}{\nabla}^{3-k} \b \cdot \na{\nabla}^{k} \u\cdot {\nabla}^3 \b \, dx
+ \int_{\R^3} \b \cdot \nabla {\nabla}^3\u \cdot {\nabla}^3\b \, dx.
\end{align}
In view of Lemma \ref{ani}, for the first term on the right hand side of \eqref{high33}, one has
\begin{align*}
&\sum_{k = 0}^2 \mathbb{C}_{3}^k\int_{\R^3}{\nabla}^{3-k} \b \cdot \na{\nabla}^{k} \u\cdot {\nabla}^3 \b \, dx\nn\\
&\quad\le C\sum_{k = 0}^2\| {\nabla}^{3-k} \b\|_{L^2}^{\f 12}\|\p_1 {\nabla}^{3-k} \b\|_{L^2}^{\f 12}
\|{\nabla}^3 \b\|_{L^2}^{\f 12}\|\p_2 {\nabla}^3 \b\|_{L^2}^{\f 12}\| \na{\nabla}^{k} \u\|_{L^2}^{\f 12}\|\p_3 \na{\nabla}^{k} \u\|_{L^2}^{\f 12}\nn\\
&\quad\le C\|  \b\|_{H^3}\| \nabla_h \b\|_{H^3}\| \nabla \u\|_{H^3}\nn\\
&\quad\le \frac{\sigma}{16}\| \nabla_h \b\|_{H^3}^2+C\|  \b\|_{H^3}^2\| \nabla \u\|_{H^3}^2
\end{align*}
 which implies
\begin{align}\label{high35}
\int_{\R^3} {\nabla}^3 (\b\cdot \nabla \u)\cdot {\nabla}^3 \b \; dx\le \frac{\sigma}{16}\| \nabla_h \b\|_{H^3}^2+C\|  \b\|_{H^3}^2\| \nabla \u\|_{H^3}^2
+ \int_{\R^3} \b \cdot \nabla {\nabla}^3\u \cdot {\nabla}^3\b \, dx.
\end{align}
For the last term in $f_3$, we use  Leibniz's rule again to write
\begin{align}\label{high36}
\int_{\R^3} {\nabla}^3 (\b\div\u)\cdot {\nabla}^3 \b \; dx& =  \sum_{k = 0}^2 \mathbb{C}_{3}^k\int_{\R^3}{\nabla}^{3-k} \b {\nabla}^{k} \div\u\cdot {\nabla}^3 \b \, dx
+ \int_{\R^3} \b  {\nabla}^3\div \u \cdot {\nabla}^3\b \, dx.
\end{align}
Then, from Lemma \ref{ani}, the first term on the right hand side of \eqref{high36} can be bounded by
\begin{align}\label{high37}
&\sum_{k = 0}^2 \mathbb{C}_{3}^k\int_{\R^3}{\nabla}^{3-k} \b {\nabla}^{k} \div\u\cdot {\nabla}^3 \b \, dx\nn\\
&\quad\le C\sum_{k = 0}^2\| {\nabla}^{3-k} \b\|_{L^2}^{\f 12}\|\p_1 {\nabla}^{3-k} \b\|_{L^2}^{\f 12}
\|{\nabla}^3 \b\|_{L^2}^{\f 12}\|\p_2 {\nabla}^3 \b\|_{L^2}^{\f 12}\| {\nabla}^{k} \div\u\|_{L^2}^{\f 12}\|\p_3 {\nabla}^{k} \div\u\|_{L^2}^{\f 12}\nn\\
&\quad\le C\|  \b\|_{H^3}\| \nabla_h \b\|_{H^3}\| \nabla \u\|_{H^3}\nn\\
&\quad\le\frac{\sigma}{16}\| \nabla_h \b\|_{H^3}^2+C\|  \b\|_{H^3}^2\| \nabla \u\|_{H^3}^2.
\end{align}
By using the second inequality in Lemma \ref{ani}, the last term in \eqref{high36} can be bounded by
\begin{align}\label{high38}
\int_{\R^3} \b  {\nabla}^3\div\u \cdot {\nabla}^3\b \, dx
\le& C\|\b\|_{L^2}^{\f 14}\|\p_1\b\|_{L^2}^{\f 14} \|\p_3\b\|_{L^2}^{\f 14} \|\p_1\p_3\b\|_{L^2}^{\f 14}\|{\nabla}^3\b\|_{L^2}^{\f 12} \|\p_2{\nabla}^3\b\|_{L^2}^{\f12}\|{\nabla}^3\div\u\|_{L^2}\nn\\
\le& C\|  \b\|_{H^3}\| \nabla_h \b\|_{H^3}\| \nabla \u\|_{H^3}\nn\\
\le& \frac{\sigma}{16}\| \nabla_h \b\|_{H^3}^2+C\|  \b\|_{H^3}^2\| \nabla \u\|_{H^3}^2
\end{align}
from which and \eqref{high37}, we can get
\begin{align}\label{high39}
\int_{\R^3} {\nabla}^3 (\b\div\u)\cdot {\nabla}^3 \b \; dx\le& \frac{\sigma}{8}\| \nabla_h \b\|_{H^3}^2+C\|  \b\|_{H^3}^2\| \nabla \u\|_{H^3}^2.
\end{align}
Combining with \eqref{high32}, \eqref{high35}, and \eqref{high39}, we have
\begin{align}\label{high40}
\int_{\R^3}\la^{3} f_3\cdot\la^{3} \b\,dx
\le&\frac{5\sigma}{16}\| \nabla_h \b\|_{H^3}^2+C\|  \b\|_{H^3}^2\| \nabla \u\|_{H^3}^2
+ \int_{\R^3} \b \cdot \nabla {\nabla}^3\u \cdot {\nabla}^3\b \, dx.
\end{align}

Inserting \eqref{high14}, \eqref{high28}, and \eqref{high40} into  \eqref{high8} and using the following cancellation
\begin{align*}
 \int_{\R^3} \b \cdot \nabla {\nabla}^3\b \cdot {\nabla}^3\u \, dx+ \int_{\R^3} \b \cdot \nabla {\nabla}^3\u \cdot {\nabla}^3\b \, dx=0,
\end{align*}
we can arrive at \eqref{highenergy}. Consequently, we complete the proof  the proposition.
\end{proof}

\vskip .1in
By the line, we now turn to the dissipation of the density. More precisely, we have the following proposition.
\begin{proposition} \label{adis}
Let $(a,\u,\b) \in C([0, T];H^3(\R^3))$ be a solution to the  system \eqref{m3}, there holds
\begin{align}\label{yiming1}
	&\norm{\nabla a}{H^{{2}}}^2+\sum_{k=0}^2\frac{d}{dt}{\int_{\R^3}{{\nabla}^{k}} \u\cdot{{\nabla}^{k}}\nabla a\,dx}\nn\\
&\quad\le C\norm{\nabla \u}{H^3}^2+C\norm{(a,\b)}{H^3}^2\norm{\nabla a}{H^2}^2
+C\norm{(a,\u)}{H^3}^2\norm{\nabla \u}{H^3}^2 +C\norm{(a,\b)}{H^3}^2\norm{\nabla_h \b}{H^3}^2.
\end{align}
\end{proposition}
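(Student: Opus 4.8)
The plan is to recover the missing density dissipation from the coupling between $a$ and $\u$ in the momentum equation of \eqref{m3}: solving that equation for $\nabla a$ shows that testing the velocity against $\nabla a$ turns the pressure gradient into a coercive term $\norm{\nabla a}{L^2}^2$. Concretely, for each $k\in\{0,1,2\}$ I would apply $\nabla^k$ to the first two equations of \eqref{m3}, take the $L^2$ inner product of $\nabla^k\u$ with $\nabla^k\nabla a$, and differentiate in time. Substituting $\partial_t\nabla^k\u=\mu\Delta\nabla^k\u+(\lambda+\mu)\nabla\div\nabla^k\u-\nabla^k\nabla a+\nabla^k f_2$ in the $\partial_t\u$ contribution, and (after one integration by parts in $x$) $\partial_t\nabla^k a=-\nabla^k\div\u+\nabla^k f_1$ in the $\partial_t a$ contribution, yields the identity
\begin{align*}
&\norm{\nabla^k\nabla a}{L^2}^2+\frac{d}{dt}\int_{\R^3}\nabla^k\u\cdot\nabla^k\nabla a\,dx\\
&\quad=\norm{\nabla^k\div\u}{L^2}^2+\int_{\R^3}\big(\mu\Delta\nabla^k\u+(\lambda+\mu)\nabla\div\nabla^k\u\big)\cdot\nabla^k\nabla a\,dx\\
&\qquad+\int_{\R^3}\nabla^k f_2\cdot\nabla^k\nabla a\,dx-\int_{\R^3}\nabla^k\div\u\,\nabla^k f_1\,dx.
\end{align*}
Summing over $k=0,1,2$ converts the first left-hand term into an equivalent of $\norm{\nabla a}{H^2}^2$, the density dissipation advertised in \eqref{yiming1}; the time-derivative sum is exactly the second term on the left of \eqref{yiming1}.

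Next I would bound the right-hand side. The two linear terms are controlled by $C\norm{\nabla\u}{H^3}^2$: one has $\sum_k\norm{\nabla^k\div\u}{L^2}^2\le\norm{\nabla\u}{H^2}^2$, and the viscous cross term is treated by Cauchy--Schwarz and Young, $\int(\mu\Delta\nabla^k\u+\cdots)\cdot\nabla^k\nabla a\le\eta\norm{\nabla^{k+1}a}{L^2}^2+C_\eta\norm{\nabla^{k+2}\u}{L^2}^2$, where $\norm{\nabla^{k+2}\u}{L^2}\le\norm{\nabla\u}{H^3}$ for $k\le2$ and the small multiple of $\norm{\nabla^{k+1}a}{L^2}^2$ is absorbed by the density dissipation on the left. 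The $f_1$ contribution is handled by the Moser product estimate $\norm{f_1}{H^2}\le C\norm{\u}{H^3}\norm{\nabla a}{H^2}+C\norm{a}{H^3}\norm{\nabla\u}{H^2}$, so that $-\int\nabla^k\div\u\,\nabla^k f_1$ is cubic and, after Young, lands in $C\norm{(a,\u)}{H^3}^2\norm{\nabla\u}{H^3}^2$ up to an absorbable $\eta\norm{\nabla a}{H^2}^2$. For $f_2$ I would split along its definition: the inertial term $\u\cdot\nabla\u$ and the quasilinear viscous term $I(a)(\mu\Delta\u+(\lambda+\mu)\nabla\div\u)$ feed $C\norm{(a,\u)}{H^3}^2\norm{\nabla\u}{H^3}^2$ (using \eqref{eq:smalla} for $I(a)$, justified by $\sup|a|\le\frac12$), whereas $J(a)\nabla a$ feeds $C\norm{(a,\b)}{H^3}^2\norm{\nabla a}{H^2}^2$, again after absorbing a small part of $\norm{\nabla a}{H^2}^2$ into the density dissipation.

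The crux is the Lorentz-force part of $f_2$, namely $\b\cdot\nabla\b-\b\nabla\b$ together with its $I(a)$-weighted analogue. Since the magnetic field has only horizontal diffusion, pairing these terms with $\nabla^k\nabla a$ must not cost a vertical derivative on $\b$: the only dissipation available from \eqref{highenergy} is $\sigma\norm{\nabla_h\b}{H^3}^2$, so every magnetic factor has to be differentiated horizontally. This is exactly where Lemma \ref{ani} enters; distributing the derivatives $\partial_1,\partial_2$ onto the two $\b$-factors through \eqref{Ine1}/\eqref{Ine2} produces bounds of the type $C\norm{\b}{H^3}\norm{\nabla_h\b}{H^3}\norm{\nabla a}{H^2}$, which by Young fall into $C\norm{(a,\b)}{H^3}^2\norm{\nabla_h\b}{H^3}^2$ plus an absorbable $\eta\norm{\nabla a}{H^2}^2$; the $I(a)$-weighted version is identical up to an extra small factor $\norm{a}{H^3}$ supplied by \eqref{eq:smalla}. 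I expect this anisotropic bookkeeping---arranging the derivative distribution so that no magnetic factor carries a vertical derivative---to be the main obstacle, the remaining terms being routine once the derivatives are allocated. Collecting all the estimates and summing over $k=0,1,2$ then gives \eqref{yiming1}.
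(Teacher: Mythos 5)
Your proposal follows essentially the same route as the paper: the identity you derive by pairing $\nabla^k\u$ with $\nabla^k\nabla a$ and substituting both evolution equations is exactly the paper's decomposition $M_1+M_2+M_3$, the linear and $f_1$, $f_2$ terms are absorbed in the same way, and the Lorentz-force terms are handled by the same anisotropic Lemma \ref{ani} so that the top-order magnetic factor only costs $\norm{\nabla_h\b}{H^3}$. The argument is correct and matches the paper's proof in all essentials.
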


\begin{proof}
We first recall from the second equation of \eqref{m3} that
$$\nabla a
=-\partial_t \u+\mu\Delta\u+(\lambda+\mu)\nabla\div\u+f_2.
$$
Applying operator ${\nabla}^{k}(k=0,1,2)$ to the above equation, taking inner product with
${{\nabla}^{k}}\nabla a$, we then obtain
\begin{align}\label{yiming2}
\norm{{{\nabla}^{k}}\nabla a}{L^2}^2
=&-{\int_{\R^3}{{\nabla}^{k}}\partial_t \u\cdot{{\nabla}^{k}}\nabla a\,dx}\nn\\
&+{\int_{\R^3}{{\nabla}^{k}} (\mu\Delta\u+(\lambda+\mu)\nabla\div\u)\cdot{{\nabla}^{k}}\nabla a\,dx}+{\int_{\R^3}{{\nabla}^{k}} f_2\cdot{{\nabla}^{k}}\nabla a\,dx}\nn\\
\stackrel{\mathrm{def}}{=}&M_{1}+M_{2}+M_{3}.
\end{align}
Now  we estimate  nonlinear terms on the right hand side of \eqref{yiming2}. A simple calculation gives another convenient form of $M_1$. Indeed
\begin{align}\label{yiming3}
M_{1}=&-{\int_{\R^3}{{\nabla}^{k}}\partial_t \u\cdot{{\nabla}^{k}}\nabla
a\,dx}\nn\\
=&-\frac{d}{dt}{\int_{\R^3}{{\nabla}^{k}} \u\cdot{{\nabla}^{k}}\nabla a\,dx}-{\int_{\R^3}{{\nabla}^{k}}\div\u\cdot{{\nabla}^{k}} \partial_t a\,dx}\nn\\
=&-\frac{d}{dt}{\int_{\R^3}{{\nabla}^{k}} \u\cdot{{\nabla}^{k}}\nabla a\,dx}
+{\int_{\R^3}{{\nabla}^{k}}\div\u\cdot{{\nabla}^{k}}\div\u \,dx}-{\int_{\R^3}{{\nabla}^{k}}\div\u\cdot{{\nabla}^{k}} f_1\,dx}\nn\\
=& -\frac{d}{dt}{\int_{\R^3}{{\nabla}^{k}} \u\cdot{{\nabla}^{k}}\nabla a\,dx}+\norm{{{\nabla}^{k}}\div\u}{L^2}^2-{\int_{\R^3}{{\nabla}^{k}}\div\u\cdot{{\nabla}^{k}} f_1\,dx}.
\end{align}
For $k=0,1,2$, with an aid of the H\"older inequality and the Young inequality, there hold
\begin{align}\label{yiming4}
	-{\int_{\R^3}{{\nabla}^{k}}\div\u\cdot{{\nabla}^{k}} (\u\cdot\nabla a)\,dx}
	\le& C\norm{\div\u}{H^{{2}}} \norm{\u}{H^{{2}}}\norm{\nabla a}{H^{{2}}}\nn\\
	\le& \,\frac1{16}\norm{\nabla a}{H^{{2}}}^2+C\norm{\u}{H^{{2}}}^2\norm{\div\u}{H^{{2}}}^2\nn\\
\le& \,\frac1{16}\norm{\nabla a}{H^{{2}}}^2+C\norm{\u}{H^{{3}}}^2\norm{\nabla\u}{H^{{3}}}^2,
\end{align}
and
\begin{align}\label{yiming5}
	-{\int_{\R^3}{{\nabla}^{k}}\div\u\cdot{{\nabla}^{k}} (a\div\u)\,dx}
	\le& C\norm{\div\u}{H^{{2}}} \norm{a}{H^{{2}}}\norm{\div\u}{H^{{2}}}\nn\\
	\le& C\norm{a}{H^{{3}}}^2\norm{\nabla\u}{H^{{3}}}^2.
\end{align}

 Inserting the above two estimates into \eqref{yiming3}, we can get
\begin{align}\label{yiming6}
	M_{1}
	\le&-\frac{d}{dt}{\int_{\R^3}{{\nabla}^{k}} \u\cdot{{\nabla}^{k}}\nabla a\,dx}+\frac1{16}\norm{\nabla a}{H^{{2}}}^2+C\norm{\nabla\u}{H^{{3}}}^2+C\norm{(a,\u)}{H^{3}}^2\norm{\nabla\u}{H^{{3}}}^2.
\end{align}
For the second term $M_{2}$ , it follows from the H\"older inequality and the Young inequality directly that
\begin{align}\label{yiming7}
M_{2}\le&\frac1{16}\norm{{\nabla}^{k}\nabla
a}{L^2}^2+C\norm{\nabla\u}{H^{k+1}}^2\nn\\
\le&\frac1{16}\norm{\nabla a}{H^{{2}}}^2+C\norm{\nabla\u}{H^{3}}^2.
\end{align}
At last, we deal with the term $M_3.$
We first recall that
\begin{align}\label{yiming9}
f_2\stackrel{\mathrm{def}}{=}&-\u\cdot\nabla \u+\b\cdot\nabla\b-\b\nabla\b+J(a)\nabla a
\nn\\
&-I(a)(\mu\Delta\u+(\lambda+\mu)\nabla\div\u)-I(a)(\b\cdot\nabla\b-\b\nabla\b).
\end{align}
Then, according to the H\"older inequality and the Young inequality, we have
\begin{align}\label{yiming15}
		{\int_{\R^3}{{\nabla}^{k}} (\u\cdot\nabla \u)\cdot{{\nabla}^{k}}\nabla a\,dx}
	\le&\frac1{16}\norm{{{\nabla}^{k}}\nabla
	a}{L^2}^2+\norm{{{\nabla}^{k}}(\u\cdot\nabla \u)}{L^2}^2\nn\\
\le&\frac1{16}\norm{\nabla a}{H^{{2}}}^2+C\norm{\u}{H^3}^2\norm{\nabla \u}{H^3}^2,
\end{align}
\begin{align}\label{yiming16}
		{\int_{\R^3}{{\nabla}^{k}} (J(a)\nabla a)\cdot{{\nabla}^{k}}\nabla a\,dx}
	\le&\frac1{16}\norm{{{\nabla}^{k}}\nabla
	a}{L^2}^2+\norm{{{\nabla}^{k}}(J(a)\nabla a)}{L^2}^2\nn\\
\le&\frac1{16}\norm{\nabla a}{H^{{2}}}^2+C\norm{a}{H^3}^2\norm{\nabla a}{H^2}^2,
\end{align}
and
\begin{align}\label{yiming17}
		&{\int_{\R^3}{{\nabla}^{k}} (I(a)(\mu\Delta\u+(\lambda+\mu)\nabla\div\u))\cdot{{\nabla}^{k}}\nabla a\,dx}\nn\\
	&\quad\le \frac1{16}\norm{{{\nabla}^{k}}\nabla
	a}{L^2}^2+\norm{{{\nabla}^{k}}(I(a)(\Delta\u+\nabla\div\u)}{L^2}^2\nn\\
&\quad\le \frac1{16}\norm{{{\nabla}^{k}}\nabla
	a}{L^2}^2+C\norm{I(a)}{H^3}^2\norm{\Delta\u+\nabla\div\u}{H^2}^2\nn\\
&\quad\le \frac1{16}\norm{\nabla a}{H^{{2}}}^2+ C\norm{a}{H^3}^2\norm{\nabla \u}{H^3}^2.
\end{align}

Next, we bound the rest terms involved in $\b$ in $f_2.$ Since the lower order terms are easy to deal with, we  are only concerned with the highest order-derivative terms for sake of brevity. By Leibniz's rule and Lemma \ref{ani}, we have
\begin{align}\label{yiming10}
&{\int_{\R^3}{{\nabla}^{2}} (\b\cdot\nabla\b-\b\nabla\b)\cdot{{\nabla}^{2}}\nabla a\,dx}\nn\\
&\quad=\sum_{\ell = 0}^2 \mathbb{C}_{2}^\ell\int{\nabla}^{2-\ell} \b \cdot \na{\nabla}^{\ell} \b\cdot {\nabla}^2 \nabla a \, dx+\sum_{\ell = 0}^2 \mathbb{C}_{2}^\ell\int{\nabla}^{2-\ell} \b  \na{\nabla}^{\ell} \b\cdot {\nabla}^2 \nabla a \, dx\nn\\
&\quad\le C\sum_{\ell = 0}^2\|{\nabla}^{2-\ell} \b\|_{L^2}^{\f 14}\|\p_1{\nabla}^{2-\ell} \b\|_{L^2}^{\f 14} \|\p_3{\nabla}^{2-\ell} \b\|_{L^2}^{\f 14} \|\p_1\p_3{\nabla}^{2-\ell} \b\|_{L^2}^{\f 14}\notag\\
&\qquad\qquad\qquad \times \|\na{\nabla}^{\ell} \b\|_{L^2}^{\f 12} \|\p_2\na{\nabla}^{\ell} \b\|_{L^2}^{\f12}\|{\nabla}^2 \nabla a\|_{L^2}\nn\\
&\quad\le C\norm{\b}{H^3}\norm{\nabla_h \b}{H^3}\norm{\nabla a}{H^2}\nn\\
&\quad\le\frac1{16}\norm{\nabla a}{H^2}^2+C\norm{\b}{H^3}^2\norm{\nabla_h \b}{H^3}^2.
\end{align}
Similarly,
\begin{align}\label{yiming11}
{\int_{\R^3}{{\nabla}^{2}} (I(a)\b\cdot\nabla\b)\cdot{{\nabla}^{2}}\nabla a\,dx}
=&\sum_{\ell =0,1}  \mathbb{C}_{2}^\ell\int{\nabla}^{2-\ell}I(a) {\nabla}^{\ell} (\b \cdot \na\b)\cdot {\nabla}^2 \nabla a \, dx\nn\\
&+{\int_{\R^3}I(a){{\nabla}^{2}} (\b\cdot\nabla\b)\cdot{{\nabla}^{2}}\nabla a\,dx}.
\end{align}
With an aid of the H\"older inequality, the first  term on the right hand side of \eqref{yiming11} can be bounded by
\begin{align}\label{yiming12}
\sum_{\ell = 0,1}  \mathbb{C}_{2}^\ell\int{\nabla}^{2-\ell}I(a) {\nabla}^{\ell} (\b \cdot \na\b)\cdot {\nabla}^2 \nabla a \, dx
\le&\sum_{\ell = 0,1} \|{\nabla}^{2-\ell}I(a) \|_{L^3}\| {\nabla}^{\ell} (\b \cdot \na\b)\|_{L^6}\| {\nabla}^2 \nabla a\|_{L^2}\nn\\
\le&\| \b \cdot \na\b\|_{H^2}\|  \nabla a\|_{H^2}^2\nn\\
\le&\| \b \|_{H^3}^2\|  \nabla a\|_{H^2}^2.
\end{align}
For the second term, it follows from \eqref{yiming10} that
\begin{align}\label{yiming14-1}
{\int_{\R^3}I(a){{\nabla}^{2}} (\b\cdot\nabla\b)\cdot{{\nabla}^{2}}\nabla a\,dx}
\le&
  C\|I(a)\|_{L^\infty}\norm{\b}{H^3}\norm{\nabla_h \b}{H^3}\norm{\nabla a}{H^2}\nn\\
  \le&
  C\|a\|_{H^3}\norm{\b}{H^3}\norm{\nabla_h \b}{H^3}\norm{\nabla a}{H^2}\nn\\
\le& \frac1{16}\norm{\nabla a}{H^2}^2+C\norm{(a,\b)}{H^3}^2\norm{\nabla_h \b}{H^3}^2.
\end{align}
The term ${\int_{\R^3}{{\nabla}^{2}} (I(a)\b\nabla\b)\cdot{{\nabla}^{2}}\nabla a\,dx}$ can be estimated similarly, thus, we have
\begin{align}\label{yiming14}
{\int_{\R^3}{{\nabla}^{k}} (I(a)(\b\cdot\nabla\b-\b\nabla\b))\cdot{{\nabla}^{k}}\nabla a\,dx}
\le& \frac1{8}\norm{\nabla a}{H^2}^2+C\norm{(a,\b)}{H^3}^2\norm{\nabla_h \b}{H^3}^2.
\end{align}
Making use of \eqref{yiming15} through \eqref{yiming14} leads to
\begin{align}\label{yiming18}
M_3\le&\frac12\norm{ \nabla a}{H^{2}}^2+C\norm{(a,\b)}{H^3}^2\norm{\nabla a}{H^2}^2
+C\norm{(a,\u)}{H^3}^2\norm{\nabla \u}{H^3}^2 +C\norm{(a,\b)}{H^3}^2\norm{\nabla_h \b}{H^3}^2.
\end{align}
Inserting \eqref{yiming6}, \eqref{yiming7}, and \eqref{yiming18} into  \eqref{yiming2},  we can arrive at
\eqref{yiming1} by summing up $k=0,1,2$. This completes
the proof of Proposition \ref{adis}.
\end{proof}

\subsection{Completing the proof of Theorem \ref{dingli1}} \label{compl}

Multiplying by a suitable large constant $A>1$ on both hand side of \eqref{highenergy} and then summing up \eqref{yiming1}, we can get
\begin{align}\label{finalenergy}
&\frac12\frac{d}{dt}\Big({A}\norm{(a,\u,\b)}{H^3}^2+\sum_{k=0}^2{\int_{\R^3}2{{\nabla}^{k}} \u\cdot{{\nabla}^{k}}\nabla a\,dx}\Big)\nn\\
&\qquad+\norm{\nabla a}{H^{{2}}}^2+{A}\mu\norm{\nabla\u}{H^3}^2+{A}(\lambda+\mu)\norm{\div\u}{H^3}^2
+{A}\sigma\norm{\nabla_h\b}{H^3}^2 \nn\\
&\quad\le C(\| \u\|_{H^3}+\|(a,\u,\b)\|_{H^3}^2+\| \b\|_{H^3}^4)\| \nabla a\|_{H^2}^2
\nn\\
&\qquad+C(\|  a\|_{H^3}^2\|  \b\|_{H^3}^2+\|(a,\u,\b)\|_{H^3}^2)\| \nabla \u\|_{H^3}^2+C\norm{(a,\b)}{H^3}^2\norm{\nabla_h \b}{H^3}^2.
\end{align}
We  denote the initial  energy
\begin{align*}
\mathcal E(0)\stackrel{\mathrm{def}}{=}\norm{(a_0,\u_0,\b_0)}{H^3}^2,
\end{align*}
 and the total energy
\begin{align}\label{energy}
\mathcal E(t)\stackrel{\mathrm{def}}{=}\mathcal E_1(t)+\mathcal E_2(t),
\end{align}
with
\begin{align*}
 \mathcal E_1(t)\stackrel{\mathrm{def}}{=}&\norm{(a,\u,\b)}{L^\infty_{t}(H^3)}^2,\nonumber\\
\mathcal E_2(t)\stackrel{\mathrm{def}}{=}&\norm{\nabla a}{L^1_{t}(H^{{2}})}^2+\mu\norm{\nabla\u}{L^1_{t}(H^3)}^2+(\lambda+\mu)\norm{\div\u}{L^1_{t}(H^3)}^2
+\sigma\norm{\nabla_h\b}{L^1_{t}(H^3)}^2 .
\end{align*}
Due to
\begin{align}\label{}
\Big|\sum_{k=0}^2{\int_{\R^3}2{{\nabla}^{k}} \u\cdot{{\nabla}^{k}}\nabla a\,dx}\Big|
\le\norm{a}{H^3}\norm{\u}{H^3}\le C(\norm{a}{H^3}^2+\norm{\u}{H^3}^2),
\end{align}
then, we can get  from \eqref{finalenergy} that
\begin{align}\label{energy3}
\mathcal E(t)\le& C_1\mathcal E(0)+C_1(\mathcal E_1(t))^{\frac12}\mathcal E_2(t))+C_1\mathcal E_1(t)\mathcal E_2(t))+C_1(\mathcal E_1(t))^{2}\mathcal E_2(t))\nn\\
\le& C_1\mathcal E(0)+C_2(\mathcal E(t))^{\frac32}+C_2\mathcal E^3(t)
\end{align}
for some  absolute   positive constants $C_1$ and $C_2$.

Under the setting of initial data in Theorem\ref{dingli1},  there exists a positive constant $C_3$ such that
$ \mathcal E (0) + C_2 \mathcal E(0)\leq C_3 \varepsilon$. Due to the local existence result which can be achieved by  a standard processes, there exists a positive time $T$ such that
\begin{equation}\label{re}
 \mathcal E (t) \leq 2 C_3\varepsilon , \quad  \forall \; t \in [0, T].
\end{equation}
Let $T^{*}$ be the largest possible time of $T$ for what \eqref{re} holds. Now, we only need to show $T^{*} = \infty$.  By the estimate of total energy \eqref{energy3}, we can use
 a standard continuation argument to show that $T^{*} = \infty$ provided that $\varepsilon$ is small enough.  We omit the details here. Hence, we finish the proof of Theorem \ref{dingli1}. $\Box$

\bigskip
\section*{Acknowledgments}
Wu was partially supported by the National Science Foundation of the United
  States under DMS 2104682 and DMS 2309748.
Zhai was partially supported by the Guangdong Provincial Natural Science
Foundation under grant 2024A1515030115.

\bigskip
\noindent{Conflict of Interest:} The authors declare that they have no conflict of interest.

\bigskip
\noindent{Data availability statement:}
 Data sharing not applicable to this article as no datasets were generated or analysed during the
current study.

\vskip .3in

\end{document}